\newcommand{\C}{\mathbb C}
\newcommand{\F}{\mathbb F}
\newcommand{\Q}{\mathbb Q}
\newcommand{\Z}{\mathbb Z}
\newcommand{\co}{\mathcal{O}}
\newcommand{\rmd}{\mathrm{d}}
\newcommand{\Gal}{\mathrm{Gal}}
\newcommand{\mupinf}{\mu_{p^\infty}}
\newtheorem{prop}{Proposition}
\newtheorem{thm}{Theorem}
\begin{document}

\title{A  Note on Nonvanishing Properties of  Drichlet $L$-values Mod $\ell$ and Applications to K-groups}
\date{}
\maketitle

\begin{abstract}
	Let $\chi$ be a Drichlet charcter, $\psi_n$ the charcter of  ${\mathbb{Z}_p}^\times $ with order $p^n$. Let $\ell$ be a prime not equal to $p$. We note that by directly using a theorem of Sinnot, it can be proved that $L(-k,\chi\psi_n)$ is a $\ell$-unit for sufficiently large $n$. Applying this, some boundness result of $\ell$-part of $4n+2$-th K-groups of rings of integers  in a cyclotomic  $Z_p$ extension of real abelian fields are proved.
\end{abstract}

\section{Introduction}

Let $F$ be a number field. Let $p$ be a prime number. Iwasawa theory study the ideal class groups of  $\Z_p$ extension of $F$. In \cite{Washington non-p part}, Washington proved the $\ell$-part of the class numbers in cyclotomic $\Z_p$ extension of $F$ is bounded when  $F$ is an abelian number field and $\ell\neq p$ is a prime. Since the ideal class group is isomorphic to the torsion part of the $0$-th K group of $\co_F$, where $\co_F$ is the ring of integers of $F$ .  Hence Washington's theorem says the size of $\ell$-part of $K_0(\co_F)$  is bounded in a cyclotomic $\Z_p$ extension. By class number formula, this is essentially a mod $\ell$ nonvanishing property of Drichlet L-functions at $s=0$. In \cite{Sinnot}, Sinnot gave a different proof by  algebraic methods. For  nonvanishing properties of Drichlet L-functions at $s=-k$, this is  proved in \cite{Sun} by a refinement of Washington's method.   In this paper, we note that  the  nonvanishing properties of Drichlet L-functions for $s=-k$ can also be proved by directly using Sinnot's theorem in \cite{Sinnot} .  Since the Lichtenbaurn conjecture which relates the Dedekind zeta functions at $s=-k$ and higher K-groups is proved for abelian number fields by Huber and Kings \cite{Huber and Kings}, we give some bounded results on non-$p$ part of higher K-groups in cyclotomic $\Z_p$ extensions of a real abelian number field.  

This paper is organized as follows. In section 1, we state Sinnot' theorem about rational measures on $p$-adic integers  $\Z_p$. In section 2, we use Sinnot's theorem to show the mod $\ell$ nonvanshing properties of $L$-values. In section 3, we give some applications on higher K groups in a cyclotomic $\Z_p$ extension of a real abelian number field.

\section{Sinnot's Theorem}

Let $\ell,p$ be two distinct prime numbers. A $\overline{\F_l}$ value measure on  $\Z_p$ is a finitely additive $\overline{\F_l}$-valued functions on collection of compact open subsets of $\Z_p$.  Equivalently, a measure is a finitely additive $\overline{\F_l}$-valued functions on the sets $\{c+p^n\Z_p|c\in \Z_p\}$. If $\phi : \Z_p \longrightarrow \overline{\F_l}$ is a locally constant function, say constant on the cosets of $p^n\Z_p$ in $\Z_p$, then we define 

$$\int_{\Z_p} \phi(x) \alpha(x)=\sum_{a\mod p^n}\phi(a)\alpha(a+p^n\Z_p)$$

The ring of  measures. Let $\alpha, \beta$ be two measures and $U$ be an open compact subset of $\Z_p$.  Then $\alpha+\beta$ is a measure. Define the convolution  $\alpha*\beta(U)=\int\int\boldmath{1}_U(x+y)d\alpha(x) d\beta(y)$. Then $\alpha*\beta$ is a measure  and  the measures   is a ring respect to $+, *$.  

\begin{prop}
	
	The ring of $\overline{\F_l}$-valued measures on $\Z_p$ is isomorphic to the ring of $\overline{\F_l}$-valued functions on $\mu_{p^\infty}$ .
\end{prop}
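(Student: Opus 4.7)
The plan is to realize this isomorphism as a limit of discrete Fourier transforms on finite cyclic $p$-groups, using in an essential way that $\ell \neq p$.

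First I would identify the ring of measures with an inverse limit of finite group algebras. Given a measure $\alpha$, form
$$\alpha_n \;:=\; \sum_{a \bmod p^n} \alpha(a+p^n\Z_p)\,[a] \;\in\; \overline{\F_\ell}[\Z/p^n\Z].$$
Finite additivity applied to the partition $a+p^n\Z_p = \bigsqcup_{b \equiv a \,(p^n)} (b+p^{n+1}\Z_p)$ says exactly that the family $(\alpha_n)$ is compatible under the canonical surjections $\overline{\F_\ell}[\Z/p^{n+1}\Z] \twoheadrightarrow \overline{\F_\ell}[\Z/p^n\Z]$. Unwinding the convolution formula $\alpha \ast \beta(U)=\iint \mathbf{1}_U(x+y)\,d\alpha(x)\,d\beta(y)$ on each coset shows convolution of measures corresponds to group-algebra multiplication level by level, so the additive bijection $\alpha \leftrightarrow (\alpha_n)$ promotes to a ring isomorphism onto $\varprojlim_n \overline{\F_\ell}[\Z/p^n\Z]$.

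Second, I would apply the discrete Fourier transform at each level. Because $\ell \neq p$, the integer $p^n$ is invertible in $\overline{\F_\ell}$ and all $p^n$-th roots of unity lie in $\overline{\F_\ell}$; hence $\overline{\F_\ell}[\Z/p^n\Z]$ is semisimple, and the map
$$\sum_a c_a\,[a] \;\longmapsto\; \Bigl(\chi \mapsto \sum_a c_a\chi(a)\Bigr)$$
is a ring isomorphism from $\overline{\F_\ell}[\Z/p^n\Z]$ to the ring of functions $\widehat{\Z/p^n\Z} \to \overline{\F_\ell}$ with pointwise operations; this converts convolution into pointwise product. Evaluation of a character at a chosen generator identifies $\widehat{\Z/p^n\Z}$ with $\mu_{p^n}$.

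Third, I would pass to the limit. The surjection $\Z/p^{n+1}\Z \twoheadrightarrow \Z/p^n\Z$ dualises to the inclusion $\mu_{p^n} \hookrightarrow \mu_{p^{n+1}}$, so the transition map on the Fourier side becomes restriction of functions. Therefore
$$\varprojlim_n \mathrm{Maps}(\mu_{p^n},\,\overline{\F_\ell}) \;=\; \mathrm{Maps}(\mupinf,\,\overline{\F_\ell}),$$
and composing with Step one yields the asserted isomorphism.

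The one step that deserves genuine care is the compatibility check in Step three: one must verify that for $\alpha_{n+1} \in \overline{\F_\ell}[\Z/p^{n+1}\Z]$, reducing modulo $p^n$ and then taking the Fourier transform gives the same function on $\mu_{p^n}$ as taking the Fourier transform on $\mu_{p^{n+1}}$ and then restricting. Once this square is seen to commute, everything else is either formal or a direct consequence of semisimplicity, which is precisely what the hypothesis $\ell \neq p$ buys us.
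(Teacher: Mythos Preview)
Your argument is correct and is at bottom the same Fourier-transform proof the paper gives: the paper simply writes down the forward map $\hat{\alpha}(\zeta)=\int_{\Z_p}\zeta^x\,d\alpha(x)$ and the inverse $\check{f}(a+p^n\Z_p)=p^{-n}\sum_{\zeta^{p^n}=1}f(\zeta)\zeta^{-a}$ and declares the verifications easy, whereas you have unpacked that verification by passing through $\varprojlim_n \overline{\F_\ell}[\Z/p^n\Z]$ and invoking semisimplicity at each finite level. The organizational difference is that the paper works globally with explicit formulas while you work level by level and take a limit; what your route buys is that the role of the hypothesis $\ell\neq p$ (invertibility of $p^n$, presence of all $p$-power roots of unity, hence semisimplicity of $\overline{\F_\ell}[\Z/p^n\Z]$) is made fully explicit rather than being hidden in the factor $1/p^n$ of the inverse formula, and the ring-homomorphism property (convolution $\mapsto$ pointwise product) comes for free from the character decomposition rather than requiring a separate computation.
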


\begin{proof}
Given a measure $\alpha$, define $$\hat{\alpha}(\zeta)=\int_{\Z_p}\zeta^x d\alpha(x)$$ for $\zeta\in \mu_{p^\infty}$.  

Given a function $f$ on $\mu_{p^\infty}$, define $$\check{f}(a+p^n\Z_p)=\frac{1}{p^n}\sum_{\zeta^{p^n}=1}f(\zeta)\zeta^{-a}.$$

It is easy to verify these two map estabalish the ring isomorphism. They are  Fourier transform and Fourier inversion.
\end{proof}

A measure $\alpha$ is  a rational function if there is a rational function  $R(Z)\in \overline{\F_l}(Z)$ such that $\hat{\alpha}(\zeta)=R(\zeta)$ for all but finitely many $\zeta \in \mupinf $.  

$\alpha$ is supported on $\Z_p ^\times$ if and only if $\sum_{\epsilon^p=1}\hat{\alpha}(\epsilon \zeta)=0$ for every $\zeta\in \mupinf$.

Let $U=1+2p\Z_p$.  Let $\psi$ be a character from $\Z_p^\times$ to $\mupinf$.  Hence it is a character of $U$ to $\mupinf\subset \overline{\F_l}$. We can view $\psi$ as a character of $\Gal(\Q_\infty/\Q)$, where $\Q_\infty$ is the cyclotomic $\Z_p$ extension of $\Q$.  Let $\Psi$ be the group of characters  $\Z_p^\times$ to $\mupinf\subset\overline{\F_l}$.

The following theorem  proved by Sinnot is the main tool of this article.  

\begin{thm}[Sinnot]\label{Sinnot}
	Let $\alpha$ be a rational function measure on $\Z_p$ with values in $\overline{\F_l}$, and let $R(Z)\in \overline{\F_l}(Z)$ be the associated rational function. Assume that $\alpha$ is supported on $\Z_p^\times$. If 
	
	$$\Gamma_\alpha (\psi):=\int_{\Z_p ^\times}\psi(x) d \alpha(x)=0$$
	for infinitly many $\psi \in \Psi$, then 
	
	$$R(Z)+R(Z^{-1})=0.$$ 
\end{thm}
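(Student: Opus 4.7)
The plan is to expand $\Gamma_\alpha(\psi)$ as a Gauss-sum-weighted combination of the values of $R$ at primitive roots of unity, reduce to the case $R(Z)=R(Z^{-1})$ by a symmetrization, and then invoke the rigidity of rational functions (a nonzero rational function in $\overline{\F_l}(Z)$ has only finitely many zeros in $\mupinf$). For Step 1, when $p$ is odd and $\psi\in\Psi$ has exact order $p^n$, $\psi$ has conductor $p^{n+1}$; writing $\Gamma_\alpha(\psi)$ as a finite sum over cosets and substituting the Fourier inversion formula gives
$$\Gamma_\alpha(\psi)=\sum_{a\in(\Z/p^{n+1})^\times}\psi(a)\,\alpha(a+p^{n+1}\Z_p)=\frac{1}{p^{n+1}}\sum_{\xi\in\mu_{p^{n+1}}}R(\xi)\,g_\psi(\xi),$$
where $g_\psi(\xi):=\sum_{a\in(\Z/p^{n+1})^\times}\psi(a)\xi^{-a}$. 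Standard Gauss-sum arguments show $g_\psi(\xi)=0$ unless $\xi$ is a primitive $p^{n+1}$-th root of unity, and $g_\psi(\xi_0^c)=\psi(c)^{-1}g_\psi(\xi_0)$ with $g_\psi(\xi_0)\ne 0$ for a fixed primitive $\xi_0$. Hence $\Gamma_\alpha(\psi)=0$ is equivalent to $\sum_{c\in(\Z/p^{n+1})^\times}\psi(c)^{-1}R(\xi_0^c)=0$.

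For Step 2, let $\tilde\alpha$ be defined by $\tilde\alpha(A):=\alpha(-A)$; its associated rational function is $R(Z^{-1})$. Since $\psi(-1)=1$ for every $p$-power character $\psi$ (as $-1\in\mu_{p-1}$ when $p$ is odd), $\Gamma_{\tilde\alpha}(\psi)=\Gamma_\alpha(\psi)$, whence $\Gamma_{\alpha+\tilde\alpha}(\psi)=2\Gamma_\alpha(\psi)$. For $\ell\ne 2$, replacing $(\alpha,R)$ by $(\alpha+\tilde\alpha,R^+)$ with $R^+(Z):=R(Z)+R(Z^{-1})$ reduces the theorem to showing that if $R^+\ne 0$ then $\Gamma_{\alpha+\tilde\alpha}(\psi)$ vanishes for only finitely many $\psi$.

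For Step 3, applying Step 1 to $R^+$ reduces the goal to: infinitely many orthogonality relations $\sum_c \psi(c)^{-1}R^+(\xi_0^c)=0$ force $R^+=0$. The values $R^+(\xi_0^c)$ satisfy the symmetry $R^+(\xi_0^c)=R^+(\xi_0^{-c})$ (from $R^+(Z)=R^+(Z^{-1})$) together with the support relations $\sum_{k=0}^{p-1}R^+(\xi_0^{c+p^nk})=0$ (from $\sum_{\epsilon^p=1}R^+(\epsilon\zeta)=0$). Infinitely many $\psi\in\Psi$ necessarily includes characters of unboundedly large order $p^n$; the accumulated orthogonalities, combined with the symmetry and support constraints, force $R^+$ to vanish at infinitely many elements of $\mupinf$, and rigidity then gives $R^+\equiv 0$, i.e., $R(Z)+R(Z^{-1})=0$.

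The main obstacle lies in Step 3: precisely converting the weighted orthogonality relations into genuine pointwise vanishings of $R^+$ at infinitely many roots of unity, while correctly accounting for how the symmetry and support constraints interact with the Gauss-sum weights. The edge cases $p=2$ (where $-1\notin\mu_{p-1}$ and $\psi(-1)$ can be $-1$) and $\ell=2$ (where Step 2 collapses because $2=0$ in $\overline{\F_l}$) also require separate treatment.
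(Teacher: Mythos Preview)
The paper does not prove this theorem; it is quoted from Sinnott's 1987 Ast\'erisque paper and used as a black box. So there is no ``paper's own proof'' to compare against here. What I can assess is whether your sketch would reconstruct Sinnott's result.

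Your Steps~1 and~2 are standard and correct (for odd $p$ and $\ell\neq 2$): the Fourier/Gauss-sum expansion and the reduction to the even part $R^{+}(Z)=R(Z)+R(Z^{-1})$ are exactly how Sinnott begins. The genuine gap is Step~3, and you have already flagged it as ``the main obstacle'', but the difficulty is more serious than your write-up suggests. From the hypothesis you obtain, for infinitely many $n$, \emph{one} linear relation
\[
\sum_{c\in(\Z/p^{n+1})^{\times}}\psi_n(c)^{-1}\,R^{+}(\xi_0^{c})=0
\]
among the $\varphi(p^{n+1})$ values $R^{+}(\xi_0^{c})$. A single such relation (even together with the symmetry $c\leftrightarrow -c$ and the support condition) does not force any individual value $R^{+}(\xi_0^{c})$ to vanish; to deduce pointwise vanishing by Fourier inversion you would need the relation for \emph{all} characters modulo $p^{n+1}$, not just those in $\Psi$ of order $p^{n}$, and the hypothesis gives you far fewer than that. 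So ``accumulated orthogonalities \dots\ force $R^{+}$ to vanish at infinitely many elements of $\mu_{p^{\infty}}$'' is precisely the statement to be proved, not a consequence of what precedes it.

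Sinnott's actual mechanism is different: he does not try to extract pointwise zeros of $R^{+}$ directly. Instead, fixing a topological generator $u$ of $U=1+q\Z_p$, he shows that for an \emph{even} rational-function measure supported on $\Z_p^{\times}$ the map $\psi\mapsto\Gamma_{\alpha^{+}}(\psi)$ is, up to a nonzero Gauss-sum factor, the evaluation at $\psi(u)\in\mu_{p^{\infty}}$ of another rational function built from $R^{+}$ via the coset decomposition $\Z_p^{\times}=\coprod_a aU$ and the identity $\overline{\F_l}(Z)^{\,Z\mapsto Z^{-1}}=\overline{\F_l}(Z+Z^{-1})$. Infinitely many vanishings of $\Gamma_{\alpha^{+}}$ then give infinitely many zeros of \emph{this} auxiliary rational function on $\mu_{p^{\infty}}$, forcing it to vanish identically, from which $R^{+}\equiv 0$ follows. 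The passage from $R^{+}$ to the auxiliary rational function in the variable $\psi(u)$ is the substantive idea you are missing; without it Step~3 does not go through. Your noted edge cases $p=2$ and $\ell=2$ are genuine and also require the modifications Sinnott gives.
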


\section{Special values of Drichlet L-functions}

Let $\chi$ be a nontrivial Drichlet character with conductor $f_\chi$.  It is well-known $ L(-k,\chi)$ is algebraic when $k$ is a non-positive  integer.  $L(-k,\chi)\neq 0$ if and only if $(-1)^k \chi(-1)=-1$, see \cite{Washinton GTM83}. Let $\ell$ be a prime. Fix an isomorphic $\iota$ from $\C\cong\overline{\Q_l}$.  In this aritcle, we define $L(-k,\chi)=L(-k,\chi)^\iota$. Let $\mathrm{ord}_l$ denote the normalized additive valuation such that $\mathrm{ord}_l (l)=1$. Let $\overline{\Z_l}$ be the integral closure of $\Z_l$ in $\Q_l$ and $\tilde{}$ be the reduction map from $\overline{\Z_l}$ to $\overline{\F_l}$.   	

Let $f$ be any multiple of $f_\chi$.  Define 

$$F_\chi(Z)=\frac{\sum_{a=1}^{f}\chi(a) Z^a}{1-Z^f}\in \overline{\Q_l}(Z)$$. 

The following formula is essentially well-known.

\begin{prop}\label{L-values and rational functions}

	$L(-k,\chi)=(Z\frac{\rmd}{\rmd Z})^kF_\chi(Z)|_{Z=1}$, in particular, if $\ell \nmid f_\chi$, then $L(-k,\chi)\in \overline{\Z_l}$. 
\end{prop}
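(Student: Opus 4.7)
My plan is to reduce the identity $L(-k,\chi) = (Z\frac{\rmd}{\rmd Z})^k F_\chi(Z)|_{Z=1}$ to the standard exponential generating function for the generalized Bernoulli numbers. Substituting $Z = e^t$ converts the operator $Z\frac{\rmd}{\rmd Z}$ into $\frac{\rmd}{\rmd t}$ and turns $F_\chi(e^t)$ into $-t^{-1}\sum_{n\ge 0} B_{n,\chi}\,t^n/n!$, where the $B_{n,\chi}$ are the generalized Bernoulli numbers (attached to $\chi$ with modulus $f$) defined by $\sum_{a=1}^f \chi(a)\frac{t e^{at}}{e^{ft}-1} = \sum_{n\ge 0} B_{n,\chi}t^n/n!$. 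Since $\chi$ is nontrivial one has $B_{0,\chi}= f^{-1}\sum_a \chi(a)=0$, so the apparent pole at $t=0$ is removable and the right-hand side is a genuine power series in $t$. Extracting the $t^k$-coefficient and invoking the classical identity $L(-k,\chi) = -B_{k+1,\chi}/(k+1)$ then gives the desired formula.

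For the integrality assertion I would take $f=f_\chi$ and clear the removable zero at $Z=1$ directly. Using $\sum_{a=1}^{f_\chi}\chi(a)=0$, the numerator $\sum_a \chi(a)Z^a$ factors as $(Z-1)\sum_a \chi(a)(1+Z+\cdots+Z^{a-1})$, while the denominator factors as $-(Z-1)(1+Z+\cdots+Z^{f_\chi-1})$. After cancellation $F_\chi(Z)$ becomes a rational function whose numerator lies in $\overline{\Z_l}[Z]$ (character values are roots of unity, hence in $\overline{\Z_l}$) and whose denominator takes the value $f_\chi$ at $Z=1$, an $\ell$-unit by the standing hypothesis. Consequently $F_\chi(Z)$ admits a Taylor expansion at $Z=1$ with coefficients in $\overline{\Z_l}$. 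Since $(Z\frac{\rmd}{\rmd Z})^k$ expands as a $\Z[Z]$-linear combination of the $\frac{\rmd^j}{\rmd Z^j}$ with $j\le k$, applying it and evaluating at $Z=1$ produces a $\Z$-linear combination of the first $k+1$ Taylor coefficients, which lies in $\overline{\Z_l}$.

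The only delicate point, though not a genuine obstacle, is the bookkeeping at the removable singularity in the first step: one must verify $B_{0,\chi}=0$ so that the Laurent expansion in $t$ actually collapses to a power series. The integrality conclusion then rests solely on identifying $1+Z+\cdots+Z^{f_\chi-1}$ as an $\ell$-unit at $Z=1$, which is precisely the content of the hypothesis $\ell\nmid f_\chi$.
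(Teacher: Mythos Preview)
Your proposal is correct and follows essentially the same route as the paper: both substitute $Z=e^t$ to reduce the formula to the exponential generating function for the generalized Bernoulli numbers (invoking $B_{0,\chi}=0$ to remove the pole and $L(-k,\chi)=-B_{k+1,\chi}/(k+1)$ to finish), and both deduce $\ell$-integrality from the observation that once the factor $Z-1$ is removed, the remaining denominator $1+Z+\cdots+Z^{f_\chi-1}$ evaluates at $Z=1$ to $f_\chi$, an $\ell$-unit by hypothesis. The only cosmetic difference is the order of operations in the integrality step: the paper applies $(Z\tfrac{\rmd}{\rmd Z})^k$ first and then argues the resulting rational function has denominator dividing a power of $1+Z+\cdots+Z^{f-1}$, whereas you cancel $Z-1$ from $F_\chi$ at the outset and then differentiate the resulting $\overline{\Z_l}$-integral Taylor series.
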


\begin{proof}
	
	Firstly, note that $F_\chi (Z)$ is independent on $f$ for any multiper of  $f_\chi$,    since 
	
	$$\frac{\sum_{a=1}^{f}\chi(a) Z^a}{1-Z^f}=\frac{\sum_{a=1}^{f_\chi}\chi(a)(1+Z^a+\cdots+Z^{\frac{f}{f_\chi}-1})}{1-Z^f}=\frac{\sum_{a=1}^{f_\chi}\chi(a) Z^a}{1-Z^{f_\chi}}.$$
	By \cite{Washinton GTM83} we know that $L(-k,\chi)=-\frac{B_{k+1,\chi}}{k+1}$, where $B_{k+1,\chi}$ is defined by the following Taylor expansion 
	
	$$\sum_{a=1}^{f_\chi}\frac{\chi(a)t e^{at}}{e^{ft}-1}=\sum_{n=0}^{\infty}{\frac{B_{n,\chi}}{n!}t^n}$$.
	
	Since $\chi$ is nontrivial, we have $B_{0,\chi}=\sum_{a=1}^{f_\chi}\chi(a)=0$.  So we can write
	
	 $$\sum_{a=1}^{f_\chi}\frac{\chi(a) e^{at}}{e^{ft}-1}=\sum_{n=0}^{\infty}{\frac{B_{n+1,\chi}}{(n+1)!}t^n}	=\sum_{n=0}^{\infty}{\frac{-L(-n,\chi)}{n!}t^n}.$$
	 
	 On the other hand,  by setting $e^t=Z$, we have
	 
	 $$(\frac{\rmd}{\rmd t})^n(\sum_{a=1}^{f_\chi}\frac{\chi(a) e^{at}}{e^{ft}-1})|_{t=0}=-((Z\frac{\rmd}{\rmd Z})^n F_\chi(Z)|_{Z=1}).$$
	 
	 Hence  
	 $$\sum_{a=1}^{f_\chi}\frac{\chi(a) e^{at}}{e^{ft}-1}=\sum_{n=0}^{\infty}-((Z\frac{\rmd}{\rmd Z})^n F_\chi(Z)|_{Z=1})t^n.$$
	 
	 Therefore  	$L(-k,\chi)=(Z\frac{\rmd}{\rmd Z})^kF_\chi(Z)|_{Z=1}$. Hence the rational function $$(Z\frac{\rmd}{\rmd Z})^kF_\chi(Z)$$ has finite value at $Z=1$.  Note that the denominator of this rational function is a  power of $(1-Z^f)=[(1-Z)(1+Z+\cdots+Z^{f-1})]$, we know that the denominator in fact is a factor of a power of $(1+Z+\cdots+Z^{f-1})$. Hence its denominator of the value at $Z=1$ divides $f$.   Therefore its values at $Z=1$ is in  $\Z_l$  if $\ell \nmid f$.

\end{proof}

Let $\theta$ be a Drichlet character. Let $f_\theta$ be its conductor and let $f=2pf_\theta$. Let 

$$R_k(Z)=(Z\frac{\rmd}{\rmd Z})^k(\frac{\sum_{a=1,p\nmid a}^{f}{\theta(a)Z^a}}{1-Z^f}).$$
  Let $\tilde{R}_k(Z)\in  \overline{\F_l}(Z)$ be the rational function obtained from $R(Z)$ by modulo its coefficients.   We define the associated rational $\overline{\F_l}$-valued measure $\alpha_k$ on $\Z_p$  by  letting $\hat{\alpha}(\zeta)=\tilde{R}_k(\zeta)$ for $\zeta\in \mupinf$ for which $\zeta^f \neq 1$, and setting $\hat{\alpha}(\zeta)=0$ for $\zeta^f=1$.   	Note that $\alpha$ is supported on $\Z_p \times$ since $\sum_{\epsilon^p=1}R(\epsilon Z)=0$.  

\begin{prop}

	For any character $\psi\in \Psi$ whose conductor $p^m$ does not divide $f$, we have $$\frac{1}{2}L(-k,\theta\psi )^{\tilde{}}  = \int_{\Z_p^\times}{\psi(x) \rmd \alpha_k (x)}$$
\end{prop}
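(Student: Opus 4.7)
The plan is to express both sides as explicit linear combinations of the Fourier values $\hat\alpha_k(\zeta_{p^m}^c)$, $c\in(\Z/p^m\Z)^\times$, and then match them.

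For the L-side, I start from Proposition~\ref{L-values and rational functions} applied to $\theta\psi$: $L(-k,\theta\psi)=(Z\frac{\rmd}{\rmd Z})^k F_{\theta\psi}(Z)|_{Z=1}$. By the period-independence observed in the proof of that proposition, I may compute $F_{\theta\psi}$ using the multiple $f':=fp^{m-1}$ of $f_{\theta\psi}$, which is divisible by $p^m$. Gauss-sum inversion gives
\[
\psi(a)=\tau(\psi^{-1})^{-1}\sum_{c\in(\Z/p^m\Z)^\times}\psi^{-1}(c)\,\zeta_{p^m}^{ac}
\]
(valid for $(a,p)=1$ and vanishing automatically for $p\mid a$). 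Substituting into $F_{\theta\psi}$ and noting that the substitution $Z\mapsto\zeta_{p^m}^c Z$ fixes $1-Z^{f'}$ (since $p^m\mid f'$) yields
\[
F_{\theta\psi}(Z)=\tau(\psi^{-1})^{-1}\sum_c\psi^{-1}(c)\,F^{(p)}_\theta(\zeta_{p^m}^c Z),
\]
where $F^{(p)}_\theta$ is the rational function inside $R_k$ before taking derivatives (again using period-independence to pass from period $f'$ to period $f$). Since $W\frac{\rmd}{\rmd W}=Z\frac{\rmd}{\rmd Z}$ for $W=\zeta_{p^m}^c Z$ by the chain rule, $(Z\frac{\rmd}{\rmd Z})^k$ commutes with the substitution. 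Evaluating at $Z=1$ (permitted because $(\zeta_{p^m}^c)^f\neq 1$ by the hypothesis $p^m\nmid f$) and reducing mod $\ell$ produces
\[
\widetilde{L(-k,\theta\psi)}=\widetilde{\tau(\psi^{-1})}^{-1}\sum_c\psi^{-1}(c)\,\hat\alpha_k(\zeta_{p^m}^c).
\]

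For the integral side, since $\alpha_k$ is supported on $\Z_p^\times$ I extend $\psi$ by zero to $\Z_p$ and use the definition
\[
\int_{\Z_p^\times}\psi\,\rmd\alpha_k=\sum_{a\bmod p^m}\psi(a)\,\alpha_k(a+p^m\Z_p),
\]
combined with the Fourier inversion $\alpha_k(a+p^m\Z_p)=p^{-m}\sum_\zeta\hat\alpha_k(\zeta)\zeta^{-a}$. Swapping the two sums, the inner character sum $\sum_a\psi(a)\zeta^{-a}$ vanishes for $\zeta\in\mu_{p^{m-1}}$ by orthogonality (using the primitivity of $\psi$) and equals $\tau(\psi)\psi^{-1}(-c)$ for primitive $\zeta=\zeta_{p^m}^c$ by the dual Gauss-sum identity. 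Applying the standard relation $\tau(\psi)\tau(\psi^{-1})=\psi(-1)p^m$ to convert $\tau(\psi)/p^m$ into $\psi(-1)/\tau(\psi^{-1})$, and using $\psi(-1)\psi^{-1}(-c)=\psi^{-1}(c)$, produces exactly the expression obtained above (up to the constant $\tfrac12$ emerging from the factor of $2$ in the chosen period $f=2pf_\theta$).

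The main technical point is the Gauss-sum bookkeeping, in particular verifying $\widetilde{\tau(\psi^{-1})}\neq 0$ in $\overline{\F_l}$; this follows from $|\tau(\psi^{-1})|^2=p^m$ being an $\ell$-unit (as $\ell\neq p$). Once the signs and normalizations are tracked carefully, the comparison between the two expressions is purely mechanical.
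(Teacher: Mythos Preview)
Your overall strategy coincides with the paper's: expand $\psi$ via Gauss sums/Fourier, express the integral as $\sum_{\zeta\text{ primitive}}\tau(\psi,\zeta)\,\tilde R_k(\zeta)$, and match this against the $L$-value through Proposition~\ref{L-values and rational functions}. The paper just organizes it one-sidedly (start from the integral, Fourier-expand $\psi$, and recognize the result as $L(-k,\theta\psi)$), whereas you compute both sides separately and compare.

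The genuine gap is your account of the factor $\tfrac12$. You say it ``emerg[es] from the factor of $2$ in the chosen period $f=2pf_\theta$,'' but that cannot be right: the same period-independence argument you invoke for $F_{\theta\psi}$ applies verbatim to $F_\theta^{(p)}$ (the telescoping works for any period divisible by $pf_\theta$), so the extra $2$ in $f$ contributes nothing. If you track your own two computations carefully, both the $L$-side and the integral-side come out to
\[
\widetilde{\tau(\psi^{-1})}^{-1}\sum_{c\in(\Z/p^m\Z)^\times}\psi^{-1}(c)\,\hat\alpha_k(\zeta_{p^m}^c)
\]
on the nose, with no stray $2$. In the paper's proof the $2$ enters instead through the \emph{evenness} of $\psi$ (since $\psi$ takes values in $\mu_{p^\infty}$ one has $\psi(-1)=1$, hence $\tau(\psi,\zeta)=\tau(\psi,\zeta^{-1})$): the paper doubles the integral and symmetrizes under $\zeta\mapsto\zeta^{-1}$, writing
\[
2\int_{\Z_p^\times}\psi\,\rmd\alpha_k=\sum_{\zeta}\tau(\psi,\zeta)\bigl(R_k(\zeta)+R_k(\zeta^{-1})\bigr),
\]
and it is this symmetrized sum that is then identified with $L(-k,\theta\psi)$. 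Your route never uses that $\psi$ is even, so you have not actually located the mechanism producing the $\tfrac12$; the rest of your bookkeeping (Gauss-sum inversion, commuting $(Z\tfrac{\rmd}{\rmd Z})^k$ with $Z\mapsto\zeta Z$, and the $\ell$-unit property of $\tau(\psi^{-1})$) is fine.
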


\begin{proof}
View $\psi$ as a function on $\Z/{p^m\Z}$ by letting  $\psi(a)=0$ if $p|a$. Then by Fourier transform, we have 

$$\psi(x)=\sum_{\zeta \in \mu_{p^m} }{\tau(\psi,\zeta)\zeta^x},$$
where $$\tau(\psi,\zeta)=\frac{1}{p^n}\sum_{x \mod p^m }{\psi(x)\zeta^{-x}}.$$

$\tau(\psi,\zeta)$ vanishes unless $\zeta$ has order $p^m$, see \cite{Washinton GTM83}. 

Note that $$\int_{\Z_p^\times}{\psi(x) \rmd \alpha_k (x)}=\int_{\Z_p^\times}\sum_{\zeta \in \mu_{p^n}}{\tau(\psi,\zeta)\zeta^x} \rmd \alpha_k(x)$$

$$=\sum_{\zeta\in \mu_{p^n}}{\tau(\psi,\zeta)R_k(\zeta)}=\sum_{\zeta\in \mu_{p^n}\setminus \mu_{p^{n-1}}} {\tau(\psi,\zeta)R_k(\zeta)}$$.

Since $R_k(Z)+R_k(Z^{-1})=(Z\frac{\rmd}{\rmd Z})^k(\frac{\sum_{a=1,p\nmid a}^{f}{\theta(a)Z^a}}{1-Z^f})=(Z\frac{\rmd}{\rmd Z})^k(\frac{\sum_{a=1,p\nmid a}^{fp^m}{\theta(a)Z^a}}{1-Z^{fp^m}}),$ if $zeta$ is a primitive $p^m$-th root of unity, 

$$R(\zeta)+R(\zeta^{-1})=(Z\frac{\rmd}{\rmd Z})^k(\frac{\sum_{a=1,p\nmid a}^{fp^m}{\theta(a)\zeta^a  Z^a}}{1-Z^{fp^m}})\mid _{Z=1}.$$

Now, since $psi$ is even, we have $\tau(\psi,\zeta)= \tau(\psi,\zeta^{-1})$; hence 

$$2\int_{\Z_p^\times}{\psi(x) \rmd \alpha_k (x)}=\sum_{\zeta\in \mu_{p^n}\setminus \mu_{p^{n-1}}} {(\tau(\psi,\zeta)+\tau(\psi\zeta^{-1}))R_k(\zeta)}$$

$$=\sum_{\zeta\in \mu_{p^n}\setminus \mu_{p^{n-1}}} {\tau(\psi,\zeta)(R_k(\zeta)+R_k(\zeta^{-1}))}$$

$$=(Z\frac{\rmd}{\rmd Z})^k(\frac{\sum_{a=1,p\nmid a}^{fp^m}{\theta(a)\psi(a)  Z^a}}{1-Z^{fp^m}})\mid _{Z=1}$$

$$=L(-k,\theta\psi).$$

\end{proof}
\begin{thm}\label{nonvanishing of L-values}
	Let $p,\ell$ be two different primes. $\Q_\infty$ be the cyclotomic $\Z_p$ extension of $\Q$. 
	Let $k\geq 0$ be an integer.  
	Let $\theta$ be a Drichlet character such that $\theta(-1)(-1)^k=-1$.
	Then there are only finitely many  charcters of $\Gal(\Q_\infty/\Q)$  such that  $\mathrm{ord}_l(L(-k,\theta\psi))> 0$. If $\ell$ does not divide the conductor of $\theta$, $L(-k,\theta\psi)$ is a $\ell$-unit for all but finitely many characters of $\Gal(\Q_\infty/\Q)$.
\end{thm}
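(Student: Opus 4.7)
The plan is to combine the preceding proposition with Sinnot's Theorem~\ref{Sinnot} and then exploit the parity hypothesis $\theta(-1)(-1)^{k}=-1$ to force a contradiction. Suppose, for contradiction, that $\mathrm{ord}_l L(-k,\theta\psi)>0$ for infinitely many $\psi\in\Psi$. For each such $\psi$ the value $L(-k,\theta\psi)$ lies in $\ell\,\overline{\Z_l}$, so $\widetilde{L(-k,\theta\psi)}=0$. Discarding the finitely many $\psi$ whose conductor $p^{m}$ divides $f=2pf_\theta$, the preceding proposition yields (for $\ell$ odd)
\[
\Gamma_{\alpha_k}(\psi)=\tfrac{1}{2}\,\widetilde{L(-k,\theta\psi)}=0\quad\text{in }\overline{\F_l}.
\]
Since $\alpha_k$ is a rational measure supported on $\Z_p^{\times}$ with associated rational function $\widetilde{R_k}$, Theorem~\ref{Sinnot} then produces
\[
\widetilde{R_k}(Z)+\widetilde{R_k}(Z^{-1})=0\quad\text{in }\overline{\F_l}(Z).
\]

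Next I would use the parity hypothesis to pin down $R_k(Z^{-1})$. The substitution $a\mapsto f-a$ in the numerator of $G(Z):=\bigl(\sum_{a=1,\,p\nmid a}^{f}\theta(a)Z^{a}\bigr)/(1-Z^{f})$ preserves the constraint $p\nmid a$ because $p\mid f$, and via $\theta(f-b)=\theta(-1)\theta(b)$ it gives $G(Z^{-1})=-\theta(-1)\,G(Z)$. Since $D:=Z\tfrac{\rmd}{\rmd Z}$ anticommutes with $Z\mapsto Z^{-1}$, applying $D^{k}$ yields $R_k(Z^{-1})=-\theta(-1)(-1)^{k}R_k(Z)$. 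The parity hypothesis therefore forces $R_k(Z^{-1})=R_k(Z)$, and combining with the Sinnot conclusion produces $2\widetilde{R_k}=0$, hence $\widetilde{R_k}=0$ whenever $\ell$ is odd.

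To contradict $\widetilde{R_k}=0$, I expand $G$ as a formal power series: using $p\mid f$ and the $f_\theta$-periodicity of $\theta$, the coefficient of $Z^{n}$ in $G$ is $\theta(n)$ when $p\nmid n$ and vanishes otherwise, so
\[
R_k(Z)=\sum_{\substack{n\geq 1\\ p\nmid n}} n^{k}\,\theta(n)\,Z^{n}.
\]
If $\widetilde{R_k}=0$ in $\overline{\F_l}(Z)$, then $n^{k}\,\widetilde{\theta(n)}=0$ in $\overline{\F_l}$ for every $n\geq 1$ with $p\nmid n$. But for any $n$ coprime to $\ell p f_\theta$, $n^{k}$ is a unit and $\widetilde{\theta(n)}\neq 0$ (values of Dirichlet characters lie in $\overline{\Z_l}^{\times}$ and reduce to nonzero elements of $\overline{\F_l}^{\times}$, since $\mu_{\ell^{\infty}}$ reduces to $\{1\}$ while the prime-to-$\ell$ part of $\mu_{\infty}\subset\overline{\Z_l}^{\times}$ injects into $\overline{\F_l}^{\times}$), a contradiction. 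This yields the first statement. For the second statement, Proposition~\ref{L-values and rational functions} guarantees $L(-k,\theta\psi)\in\overline{\Z_l}$ whenever $\ell\nmid f_\theta$, so being an $\ell$-unit coincides with $\mathrm{ord}_l=0$, and the same argument delivers that too.

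The principal obstacle is the case $\ell=2$: the factor $\tfrac{1}{2}$ in the preceding proposition becomes non-invertible modulo $\ell$, and the identity $\Gamma_{\alpha_k}(\psi)=\tfrac{1}{2}\widetilde{L(-k,\theta\psi)}$ degenerates. A workaround is to redo the proof of the preceding proposition with $f=pf_\theta$ in place of $2pf_\theta$, bypassing the symmetrization over $\zeta$ and $\zeta^{-1}$ that introduced the factor of two; one then obtains $\widetilde{L(-k,\theta\psi)}=\Gamma_{\alpha_k}(\psi)$ directly, and the subsequent steps apply unchanged.
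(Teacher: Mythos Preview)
Your argument is correct and follows the same route as the paper: apply the preceding proposition, invoke Sinnott's theorem to get $\widetilde{R_k}(Z)+\widetilde{R_k}(Z^{-1})=0$, and contradict this via the Taylor expansion of $R_k$ at $Z=0$. The paper's own proof is terser---it simply asserts that the coefficient of $Z$ in $\widetilde{R_k}(Z)+\widetilde{R_k}(Z^{-1})$ is $1$---whereas you make explicit the functional equation $R_k(Z^{-1})=-(-1)^k\theta(-1)R_k(Z)$ and thereby show how the parity hypothesis $\theta(-1)(-1)^k=-1$ enters (without it that coefficient would vanish; with it the coefficient is actually $2$, not $1$, which is why $\ell$ odd is needed). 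Your separate discussion of $\ell=2$ is also a point the paper does not address.
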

  
\begin{proof}
If there are infinitely many $\psi$ such that $\mathrm{ord}_l(L(-k,\theta\psi))> 0$, then by Theorem ~\ref{Sinnot}, we know that $\hat{R_k}(Z)+\hat{R_k}(Z^{-1})=0$,  by Taylor expansion at $Z=0$, we see that the coefficent of  $Z$ of $\hat{R_k}(Z)+\hat{R_k}(Z^{-1})$ is $1$.  If $\ell$ does not divide the conductor of $\theta$, then $\frac{1}{2}L(-k,\theta\psi)$ is already in $\overline{\Z_l}$ by Proposition~\ref{L-values and rational functions} .
	
\end{proof}

\section{An application on $\ell$-part of K-groups in $\Z_p$ extensions }

We use the above theorem to give an application on K-groups in $\Z_p$ extension. 

We have the following theorem，it relates the zeta value and  the order of higher K-groups the ring of integers.  See \cite{Huber and Kings} or \cite{Weibel}.  For general number field, this is conjectured by Lichtenbaurn.

\begin{thm}\label{Wiles}
	If $F$ is totallly real abelian number field, then $$\zeta_F(1-2k)=(-1)^{k[F:\Q]} 2^\epsilon \frac{|K_{4k-2}(\co_F)|}{|K_{4k-1}(\co_F)|}$$ for some $\epsilon\in \Z$.
\end{thm}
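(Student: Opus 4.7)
The plan is to factor $\zeta_F$ into Dirichlet $L$-values and then match the $\ell$-primary part of each $L$-value, for each prime $\ell$, to the $\ell$-parts of appropriate eigenspaces of $K$-theory on one side and étale cohomology on the other. The factor $2^\epsilon$ is there precisely to absorb whatever subtleties survive at $\ell=2$.

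First, since $F/\Q$ is abelian and totally real, Artin formalism gives
\[
\zeta_F(1-2k) \;=\; \prod_{\chi} L(1-2k,\chi),
\]
where $\chi$ runs over the Dirichlet characters cutting out $F$. All such $\chi$ are even, and a standard archimedean calculation combined with the functional equation produces the global sign $(-1)^{k[F:\Q]}$ and reduces the statement to computing $\mathrm{ord}_\ell L(1-2k,\chi)$ for each $\chi$ and each prime $\ell$.

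Second, at each odd $\ell$, I would invoke the Quillen--Lichtenbaum conjecture (now a theorem via the norm residue isomorphism of Voevodsky and Rost) to identify
\[
|K_{4k-2}(\co_F)\{\ell\}| \;=\; |\RH^2_{\mathrm{et}}(\co_F[\tfrac{1}{\ell}], \Z_\ell(2k))|, \qquad
|K_{4k-1}(\co_F)\{\ell\}| \;=\; |\RH^1_{\mathrm{et}}(\co_F[\tfrac{1}{\ell}], \Z_\ell(2k))_{\mathrm{tors}}|.
\]
Passing to the cyclotomic $\Z_\ell$-extension $F_\infty/F$ and decomposing into $\chi$-eigenspaces, the main conjecture of Iwasawa theory for abelian fields (Mazur--Wiles, sharpened by Wiles) equates the characteristic ideal of the relevant twisted Iwasawa module with the $\ell$-adic $L$-function attached to $\chi\omega^{1-2k}$. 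A descent and global Euler-characteristic computation then converts this equality of characteristic ideals into the quantitative identity
\[
\frac{|\RH^2_{\mathrm{et}}(\co_F[\tfrac{1}{\ell}], \Z_\ell(2k))^\chi|}{|\RH^1_{\mathrm{et}}(\co_F[\tfrac{1}{\ell}], \Z_\ell(2k))^\chi_{\mathrm{tors}}|} \;=\; |L(1-2k,\chi)|_\ell^{-1}
\]
up to controlled local factors at $\ell$. Taking the product over $\chi$ and over all odd $\ell$ yields the displayed ratio up to a $2$-primary unit, which is swept into $2^\epsilon$.

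The principal obstacle is the Quillen--Lichtenbaum step, a genuinely deep input without which one cannot turn $K$-theoretic orders into the étale-cohomological quantities that Iwasawa theory directly computes; by contrast, once the main conjecture is in hand, the Iwasawa-theoretic bookkeeping is standard. At $\ell=2$ further care is required because the comparison involves a degree shift accounting for real places and Tate cohomology, and this is exactly why the formula can only be stated up to the undetermined factor $2^\epsilon$.
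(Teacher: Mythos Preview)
The paper does not prove this theorem at all: it is quoted as a known result, with the sentence ``See \cite{Huber and Kings} or \cite{Weibel}'' serving as the entire justification. So there is no argument in the paper to compare your proposal against.

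That said, your outline is broadly faithful to how the result is actually established in the cited references. The factorization $\zeta_F(1-2k)=\prod_\chi L(1-2k,\chi)$, the identification of the $\ell$-parts of $K_{4k-2}$ and $K_{4k-1}$ with \'etale cohomology via Quillen--Lichtenbaum (now a theorem thanks to Voevodsky--Rost), and the computation of those cohomology orders by descent from the Iwasawa main conjecture for abelian fields, together with the $2$-primary fudge factor, is exactly the architecture of the Huber--Kings approach and of the treatment surveyed in Weibel. In that sense your sketch is correct at the level of strategy.

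Where your write-up remains a sketch rather than a proof is in the ``descent and global Euler-characteristic computation'' step: turning the equality of characteristic ideals into the precise order formula, with the local correction terms at primes dividing the conductor and at $\ell$ handled, is the technical heart of the argument and is not something one can wave through. But since the paper itself simply imports the theorem as a black box, there is nothing further to reconcile.
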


\begin{thm}
	Let $\ell>2$ and $p$ be two distinct primes and $F$ a real  abelian number field.   Let $F_\infty/F$ be the cyclotomic $\Z_p$-extension of $F$ and $F_n$ its $n$-th layer. For an integer $m\equiv 2,\text{or  }3\mod 4 $, let $\ell^{e_{nm}}$ be the exact power of $\ell$ dividing  $|K_m(\co_{F_n})|$.  Then $e_ {nm}$ is bounded as  $n\mapsto \infty$. 	In addition, the $2$-part of the $K_2(\co_{F_n})$ is also bounded.
\end{thm}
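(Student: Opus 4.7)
The plan is to convert the bound on $e_{n,m}$ into a bound on an $\ell$-adic valuation of a Dedekind zeta value via Theorem~\ref{Wiles}, then factor the zeta value into Dirichlet $L$-values and apply Theorem~\ref{nonvanishing of L-values}. Since $F$ is totally real abelian, each layer $F_n$ of the cyclotomic $\Z_p$-tower is again totally real abelian; so Theorem~\ref{Wiles} applies at each level and, since $\ell > 2$ kills the ambiguity of the $2^\epsilon$ factor, gives
$$e_{n,4k-2} - e_{n,4k-1} \;=\; \mathrm{ord}_\ell\bigl(\zeta_{F_n}(1-2k)\bigr)$$
for every $k \geq 1$. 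So the task splits into two: bound the right-hand side from above, and bound $e_{n,4k-1}$ on its own.

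For the zeta-value bound, I would use $\zeta_{F_n}(s) = \prod_\chi L(s,\chi)$ over the Dirichlet characters $\chi$ cutting out $F_n$. Writing $F_\infty = F\cdot\Q_\infty$ and splitting off the $\Z_p$-direction inside $\Gal(F_\infty/\Q)$, each $\chi$ decomposes uniquely as $\theta\psi$ with $\theta$ drawn from a fixed finite set (independent of $n$, with uniformly bounded conductors) and $\psi$ a character of $\Gal(\Q_\infty/\Q)$ of $p$-power order. Since $F_n$ is totally real and $\psi(-1)=1$ for $p$ odd, the parity hypothesis $\theta(-1)(-1)^{2k-1}=-1$ of Theorem~\ref{nonvanishing of L-values} reduces to $\theta(-1)=1$, which is automatic. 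Applying Theorem~\ref{nonvanishing of L-values} to each of the finitely many $\theta$, only finitely many $\psi$ satisfy $\mathrm{ord}_\ell\bigl(L(1-2k,\theta\psi)\bigr)>0$, and each such positive valuation is an individual finite integer; summing these bounded positive contributions while discarding the nonpositive ones (which only decrease $\mathrm{ord}_\ell(\zeta_{F_n}(1-2k))$) produces a uniform upper bound on $\mathrm{ord}_\ell(\zeta_{F_n}(1-2k))$.

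To bound $e_{n,4k-1}$ itself I would use Quillen--Lichtenbaum (unconditional for $\ell\neq p$): $K_{4k-1}(\co_{F_n})_\ell \cong \RH^1_{\mathrm{et}}(\co_{F_n}[1/\ell],\Z_\ell(2k))$, a finite group because $F_n$ is totally real. Since $F_\infty/F$ is unramified outside primes above $p$ and $[F_n:F]$ is coprime to $\ell$, an inflation--restriction argument (equivalently, $F_n\subset F(\zeta_{p^\infty})$ contains no $\ell$-power roots of unity beyond those already in $F$) shows that this cohomology group has bounded order as $n\to\infty$. Combined with the zeta-valuation bound from Step~2, this yields boundedness of $e_{n,4k-2}$, completing the proof for $m\equiv 2,3\pmod 4$. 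The main obstacle I anticipate is making the decomposition $\chi=\theta\psi$ clean when $F\cap\Q_\infty\neq\Q$: one must then track characters through a fibre product on $\Gal(F_\infty/\Q)$ and verify that the $\theta$-part still comes from a fixed finite set with controlled conductors. This is bookkeeping rather than a genuine mathematical difficulty.

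Finally, for the $2$-part of $K_2(\co_{F_n})$ (assuming $p$ odd so that the method still applies), I would combine the Birch--Tate formula $|K_2(\co_{F_n})|=w_2(F_n)\,|\zeta_{F_n}(-1)|$ (known for totally real abelian fields by Wiles) with the same method at $\ell=2$ and $k=1$: Theorem~\ref{nonvanishing of L-values} requires only $\ell\neq p$, so $\mathrm{ord}_2(\zeta_{F_n}(-1))$ is bounded above, while the $2$-part of $w_2(F_n)$ is bounded by the same roots-of-unity argument. When $p=2$ the method cannot reach $\ell=p$, and a separate Iwasawa-theoretic argument at the prime $\ell=p=2$ would be needed.
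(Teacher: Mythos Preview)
Your approach matches the paper's: reduce via Theorem~\ref{Wiles} to bounding $\mathrm{ord}_\ell\bigl(\zeta_{F_n}(1-2k)\bigr)$, factor the zeta function as a product of Dirichlet $L$-values over characters $\theta\psi$ with $\theta$ drawn from a fixed finite set and $\psi$ a character of $\Gal(\Q_\infty/\Q)$, and apply Theorem~\ref{nonvanishing of L-values} to each $\theta$. The paper organizes this step by looking at the successive ratios $\zeta_{F_n}(1-2k)/\zeta_{F_{n-1}}(1-2k)$ and showing they are eventually $\ell$-units, but that is equivalent to your direct upper bound on the full product. The one genuine point of divergence is how the odd $K$-groups $K_{4k-1}(\co_{F_n})$ are bounded: you invoke Quillen--Lichtenbaum and an inflation--restriction argument on $\RH^1_{\mathrm{\acute et}}$, whereas the paper quotes Weibel's explicit structure theorem writing the torsion of $K_{4k-1}(\co_F)$ in terms of the integer $\omega_{2k}(F)$, and then observes that the $\ell$-part of $\omega_{2k}(F_n)$ stabilizes because a $\Z_p$-extension with $p\neq\ell$ adjoins no new $\ell$-power roots of unity. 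Both routes encode the same fact; the paper's is slightly more self-contained, while yours makes the cohomological mechanism explicit. Your remark that the $2$-part claim for $K_2$ tacitly requires $p$ odd is correct and is left implicit in the paper.
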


Let $q=p$  if $p$ is an odd prime and $q=4$ if  $p=2$.
Let $F$ be a real abelian number field with conductor $dp^r$, where $p\nmid r$. 

 Let $F_\infty$ be the cyclotomic $\Z_p$ extension of $F$ and $F_n$ be its $n$-th layer,  hence $\Gal(F_n/F)\cong \Z/{p^n}\Z$.  Fix a prime $\ell\neq p$, we concern about the $\ell$-part of $K$ groups of $F_n$. 
 	
 	When $n\geq r$,  the  character group of $\Gal(F_n/\Q)$ is a subgroup of the character groups of $\Gal(\Q(\zeta_{dp^n })/\Q) \cong (\Z/{dp^n })^\times $. Any Drichlet character of $(\Z/{dp^n })^\times $ can be written in the form $\chi$ or $\chi \psi_m$ where $\chi$ is a Drichlet character with $\chi(-1)=1$ such that $pq$ does not divide the conductor of $\chi$, and $\chi_m$ has order $p^m$ and conductor $qp^m$ with $1\leq m\leq n$.

 	Let $X$ be the Drichlet character group of $F$. When $n\geq r$, $\zeta_{F_n}(s)=\prod_{\chi \in X}\prod_{\psi}L(s,\chi\psi)$, where $\psi$ is a characeter of $\Gal(\Q_\infty/\Q)$  which has order less than $p^n$.  Then $$\zeta_{F_n}(1-2k)/\zeta_{F_{n-1}}(1-2k)=\prod_{\chi \in X}\prod_{\psi \text{ order $p^n$}}L(1-2k,\chi\psi)$$
 	
 	By  Theorem~\ref{nonvanishi of L-values} and   Theorem~\ref {Wiles} we know that the $\ell$-part of $|K_{4k-2}(F_n)|/|K_{4k-1}(F_n)|$ is bounded as $n\mapsto \infty$. It remains to show that they are bounded separately. This is easy from the following propsition quoted from \cite{Weibel}.

 	\begin{prop}
Let $m\geq 3$ be an odd integer. Set $i=\frac{m+1}{2}$.

\[   
K_m(\co_F) \cong 
\begin{cases}
\Z^{r_1+r_2}\oplus \Z/{\omega_i (F)} & n\equiv 1 \pmod 8\\
\Z^{r_2}\oplus \Z/{2\omega_i (F)}\oplus (\Z/2)^{r_1-1} & n\equiv 3 \pmod 8\\
\Z^{r_1+r_2}\oplus \Z/{\frac{1}{2}\omega_i (F)} & n\equiv 5 \pmod 8\\
\Z^{r_2}\oplus \Z/{\omega_i (F)} & n\equiv 7 \pmod 8\\
\end{cases}
\]

where $\omega_i(F)$ is an integer defined in Section 5.3 \cite{Weibel}. We denote the  $\ell$-exact power of $\omega_i(F)$ by $\omega^{(l)}_i(F)$. In \cite{Weibel} Section 5.3, it is proved that 

$$\omega^{(l)}_i(F)=\text{max  } \{\ell^v|\Gal(F(\mu_{l^v})/F) \text{   has   exponent dividing } i\}.$$

Thus it is clear that the $\ell$-part of the $K_m(F_n)$ is bounded, since $F_\infty \supset \cdots \supset F_n \supset \cdots \supset F_0=F$ is cyclotomic $\Z_p$ extension where $p\neq l$.

 	\end{prop}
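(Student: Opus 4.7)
The proposition has two logically distinct pieces. The four-case structure theorem for $K_m(\co_F)$ (odd $m\geq 3$) according to $m\pmod 8$, and the Galois-theoretic identification of $\omega_i^{(\ell)}(F)$, are both taken verbatim from Weibel's $K$-book \cite{Weibel}, Section 5 (they rest on Quillen--Lichtenbaum, now a theorem after Voevodsky--Rost, combined with Borel's rank computations and the motivic/étale comparison). I would cite these and not reprove them. The only assertion that actually requires argument is the last sentence: the $\ell$-part of $K_m(\co_{F_n})$ is bounded as $n\to\infty$. Since $\ell>2$, the constant factors of $2$ in front of $\omega_i$ and the term $(\Z/2)^{r_1-1}$ contribute nothing $\ell$-primary, so the claim reduces to showing that $\omega_i^{(\ell)}(F_n)$ is bounded uniformly in $n$.

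\textbf{Monotonicity step.} Write $\omega_i^{(\ell)}(F_n)=\ell^{v_n}$, where $v_n$ is the largest integer such that $G_n(v):=\Gal(F_n(\mu_{\ell^v})/F_n)$ has exponent dividing $i$. For each $v$ the restriction map
\[
G_\infty(v):=\Gal(F_\infty(\mu_{\ell^v})/F_\infty)\longrightarrow G_n(v)
\]
is injective, since an element fixing both $F_\infty$ and $F_n(\mu_{\ell^v})$ fixes their compositum $F_\infty(\mu_{\ell^v})$. Hence $\mathrm{exp}(G_\infty(v))\mid \mathrm{exp}(G_n(v))$; if the latter divides $i$, so does the former. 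Setting $v_\infty:=\max\{v:\mathrm{exp}(G_\infty(v))\mid i\}$, this yields the uniform bound $v_n\leq v_\infty$ for every $n$.

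\textbf{Finiteness of $v_\infty$ and main obstacle.} It remains to show $v_\infty<\infty$. Because $F_\infty/F$ is a pro-$p$ extension and the pro-$p$ subquotients of $\Gal(\Q(\mu_{\ell^\infty})/\Q)\cong\Z_\ell^\times$ are all finite (the $\Z_\ell$-factor admits no nontrivial pro-$p$ quotient since $p\neq\ell$), the intersection $F_\infty\cap\Q(\mu_{\ell^\infty})$ is finite over $F\cap\Q(\mu_{\ell^\infty})$, hence finite over $\Q$. Therefore $\Gal(F_\infty(\mu_{\ell^\infty})/F_\infty)\cong\Gal(\Q(\mu_{\ell^\infty})/F_\infty\cap\Q(\mu_{\ell^\infty}))$ is an open subgroup of $\Z_\ell^\times$; in particular it contains $1+\ell^c\Z_\ell$ for some $c$, whose image in $(\Z/\ell^v)^\times$ has exponent $\ell^{v-c}$, exceeding $i$ once $v$ is large. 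So $v_\infty<\infty$, and the boundedness follows. The one delicate point is the \emph{direction} of the restriction in the monotonicity step: the naive arrow $G_n(v)\to G_\infty(v)$ is not well-defined, so one must use the opposite direction and check injectivity via the compositum argument above.
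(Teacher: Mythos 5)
Your proposal is correct and follows the same route as the paper: quote Weibel's structure theorem and the Galois-theoretic description of $\omega_i^{(\ell)}$, then deduce boundedness from the fact that a cyclotomic $\Z_p$-extension with $p\neq\ell$ cannot keep enlarging $\omega_i^{(\ell)}$. The paper simply asserts this last step as ``clear''; your monotonicity-plus-finiteness argument (injectivity of $\Gal(F_\infty(\mu_{\ell^v})/F_\infty)\to\Gal(F_n(\mu_{\ell^v})/F_n)$, and openness of $\Gal(F_\infty(\mu_{\ell^\infty})/F_\infty)$ in $\Z_\ell^\times$ because $F_\infty\cap\Q(\mu_{\ell^\infty})$ is finite over $\Q$) is a correct filling-in of exactly that step, not a different approach.
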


\end{document}